\newtheorem{remark}{Remark}
\newtheorem{proposition}{Proposition}
\pgfplotsset{compat=1.18}
\definecolor{oceanblue}{RGB}{53, 138, 190}
\definecolor{palegreen}{RGB}{179, 216, 178}
\definecolor{paleorange}{RGB}{255, 227, 178}
\definecolor{palepurple}{RGB}{242, 204, 218}
\newcommand{\sav}[1]{\textcolor{red}{[#1]\raise 0.5ex \hbox{\footnotesize{SB}}}}
\title{\LARGE \bf
Voltage Support Procurement in Transmission Grids:\\Incentive Design via Online Bilevel Games
}
\author{Zhisen Jiang$^{1}$, Saverio Bolognani$^{2}$, Giuseppe Belgioioso$^{3}$
\thanks{$^{1}$ Shenzhen International Graduate School (SIGS), Tsinghua University, Shenzhen 518071, China.
        {\tt\small jzs22@mails.tsinghua.edu.cn}}%
\thanks{$^{2}$ Automatic Control Laboratory, ETH Zurich, 8092 Zurich, Switzerland.
        {\tt\small bsaverio@ethz.ch}}%
\thanks{$^{3}$ Division of Decision and Control Systems (DCS), KTH Royal Institute of Technology, Stockholm, Sweden.
        {\tt\small giubel@kth.se}}%
\thanks{This work was supported by the Swiss Federal Office of Energy via the grant SI/502734 MAESTRO, by the Swiss National Science Foundation under the NCCR Automation (grant agreement 51NF40\textunderscore225155), and by the Wallenberg AI, Autonomous Systems and Software Program (WASP) funded by the Knut and Alice Wallenberg Foundation.}
}
\begin{document}

\maketitle
\thispagestyle{empty}
\pagestyle{empty}

\begin{abstract}
The integration of distributed energy resources into transmission grid operations presents a complex challenge, particularly in the context of reactive power procurement for voltage support.
This paper addresses this challenge by formulating the voltage regulation problem as a Stackelberg game, where the Transmission System Operator (TSO) designs incentives to guide the reactive power responses of Distribution System Operators (DSOs).
We utilize a gradient-based iterative algorithm that updates the incentives to ensure that DSOs adjust their reactive power injections to maintain voltage stability. We incorporate principles from online feedback optimization to enable real-time implementation, utilizing voltage measurements in both TSO's and DSOs' policies.
This approach not only enhances the robustness against model uncertainties and changing operating conditions but also facilitates the co-design of incentives and automation. 
Numerical experiments on a 5-bus transmission grid demonstrate the effectiveness of our approach in achieving voltage regulation while accommodating the strategic interactions of self-interested DSOs.
\end{abstract}

\section{Introduction}

Modern distribution grids host an increasing number of distributed energy resources: micro-generators, batteries, and controllable loads. 
These resources need to be incorporated into transmission grid operation, and most efforts have focused on procuring aggregate services (e.g., primary frequency regulation) from them. 
However, this fine network of controllable resources can provide more complex services, such as controllable reactive power for real-time voltage regulation at the transmission grid level.
These services hold great value in the transition to a grid dominated by renewable energy sources: as large power plants are being replaced, they also become unavailable to regulate grid voltages; transmission grid expansion requires additional reactive power compensation; and local availability of reactive power allows the use of existing tie lines to exchange more valuable active power depending on the availability of clean generation.

As Transmission System Operators (TSOs) do not have direct control over the resources connected to the distribution grids that can be dispatched by the Distribution System Operators (DSOs), various TSO-DSO coordination methods have been proposed for voltage regulation. 
Some approaches have the TSO set voltage stability targets for the DSO to achieve \cite{Wang2023}, while others rely on setting reactive power set points for DSOs' responses \cite{OulisRousis2020, Radi2022}. 
A three-stage process is also used, where the DSO submits reactive power flexibility, the TSO dispatches injections accordingly, and the DSO re-dispatches locally \cite{Alizadeh2023,Usman2023}. 
In some cases, a combination of a prescribed behavior and an economic compensation scheme is proposed. For example, droop-like volt-VAr curves are being proposed in multiple grid codes \cite{ENTSO2012,IEEE1547,DKE2011}, which \emph{de facto} imposes a specific response by the energy resources rather than allowing them to freely respond to an incentive. 
For this reason, the joint response can be inefficient, i.e., the service is not procured from the cheapest sources, and the total capacity may not be used in full.

One notable solution is the voltage support incentive scheme proposed by the Swiss TSO Swissgrid (see Figure~\ref{swissgrid}), where DSOs are remunerated for their reactive power injection depending on whether it alleviates or aggravates the voltage regulation problem \cite{swissgrid2019concept,swissgrid2024tariffs}. 
Individual DSOs respond to the incentives via manual or automated responses (see \cite{ortmann2023deployment} for an example).
This incentive has been in place for several years (previously in a slightly different form, see \cite{Hug2021SwissCase}).

\begin{figure}
    \centering
    \includegraphics[width=\columnwidth]{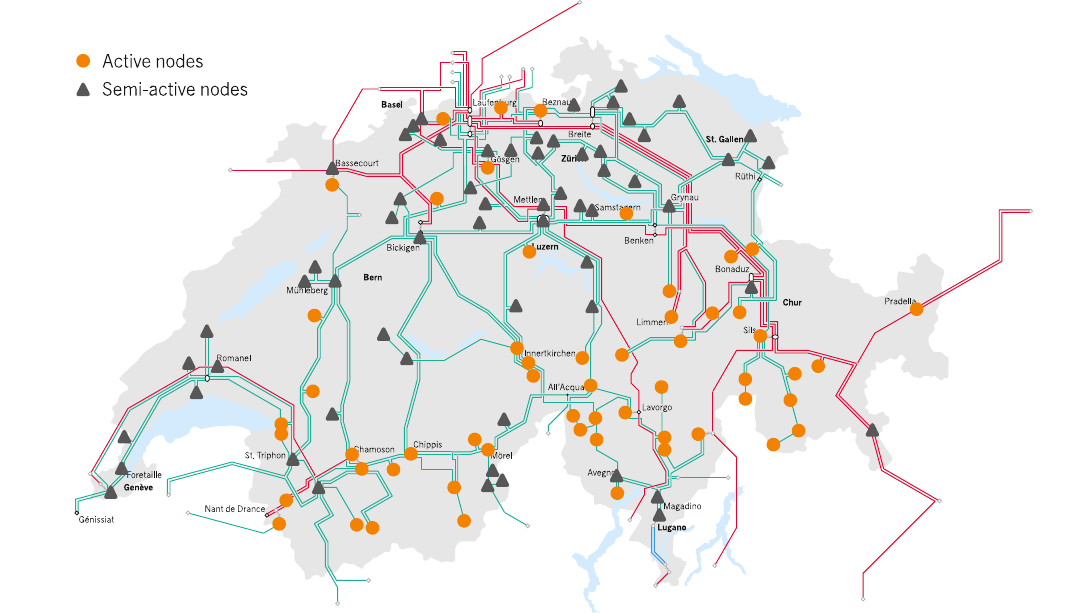}\\
    \caption{The voltage-regulation incentive implemented in the Swiss grid. DSOs are classified as active or semi-active nodes, corresponding to slightly different incentives, 
    and they are remunerated if their reactive power demand contributes to the voltage support goal.}
    \label{swissgrid}
\end{figure}

A crucial aspect of this scheme is that it couples the decision of single participants (how much reactive power to inject) to the state of the grid (the voltage at the substation), which is, in turn, affected by the decisions of all DSOs. Mathematically, this creates a noncooperative game, i.e., a decision problem where self-interested agents are coupled in their rewards. 
Because the TSO decides the parameters of the incentive scheme (similarly to \cite{cavraro2024feedback,klein2024hierarchical}), the entire architecture becomes a Stackelberg game with multiple followers.

In this paper, we first review the motivating case study of the Swiss voltage procurement scheme (Section~\ref{sec:swissgrid}).
Then, in Section~\ref{sec:game_formulation}, we use game theory's formalism to provide a rigorous mathematical formulation of the problem faced by the TSO (in deciding the incentives) and the DSOs (in responding optimally).
In Section~\ref{sec:solution} we present the main result: we consider the case in which incentives can be updated in real time and propose a computational solution that allows the TSO to adjust incentives based on the DSO response so that the desired grid voltage profile can be achieved.
The key tool in this design is adapted from \cite{GrontasDoerfler2023} and consists of a gradient-based iterative algorithm for the optimal intervention in Stackelberg games.
We finally illustrate the performance of this hierarchical service procurement solution in simulations (Section~\ref{sec:case}) and discuss the remaining open challenges in the Conclusions.

\section{A motivating case study: The Swiss voltage support procurement scheme}
\label{sec:swissgrid}

\begin{figure}[tb]
    \centering
    \includegraphics[width=0.8\columnwidth]{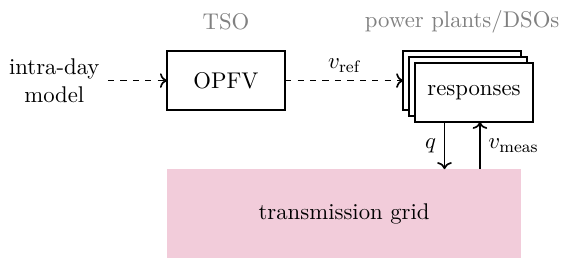}
    \caption{Hierarchical architecture implemented in the Swiss grid: the TSO produces optimal daily voltage reference schedules by solving a specialized OPF program. These voltage references are communicated to the DSO. DSOs respond by deciding their reactive power demand, based  on real-time voltage measurements (as they receive a financial incentive that is a function of local voltage and the reactive power demand).}
    \label{swiss-architecture}
\end{figure}


We briefly review the procedure that is currently used by Swissgrid to procure voltage support services from Swiss DSOs.
As shown in Figure~\ref{swiss-architecture}, it consists of a hierarchical structure.
Based on an intra-day model, which includes forecasts of the power demands over the day, the system operators solved an Optimal Power Flow problem called OPFV in Figure~\ref{swiss-architecture}. 
This optimization program computes a voltage profile for all buses of the transmission grid so that voltage limits and line congestion constraints are satisfied everywhere in the network, considering the model given by the grid power flow equations. 
%
Then, the voltage profiles are communicated the DSOs as voltage reference signals, denoted by $v_\text{ref}$ in Figure~\ref{swiss-architecture}.

\begin{figure}[tb]
    \centering
    \resizebox{!}{35mm}{\begin{tikzpicture}
\begin{axis}[
    width=0.45\textwidth,
    xlabel={$q$},
    ylabel={$v-v_\text{ref}$},
    axis lines=middle,
    domain=-12:12,
	ticks=none,
    enlargelimits=true,
    axis line style={->, thick},
    xlabel style={anchor=north},
    ylabel style={anchor=west},
    axis on top,
    axis line style={-latex}
]

\def\xlim{12}
\def\ylim{3}
\def\deltavtol{1}
\def\deltavfree{1}

\addplot[pattern={Lines[angle=45,line width=2.5pt, distance=8pt]}, pattern color=palegreen, draw=none] coordinates {(-\xlim, -\deltavtol) (\xlim, -\deltavtol) (\xlim, \deltavtol) (-\xlim, \deltavtol)};
\addplot[pattern={Lines[angle=45,line width=2.5pt, distance=8pt]}, pattern color=paleorange, draw=none] coordinates {(0, -\deltavtol-\deltavfree) (\xlim, -\deltavtol-\deltavfree) (\xlim, -\deltavtol) (0, -\deltavtol)};
\addplot[pattern={Lines[angle=45,line width=2.5pt, distance=8pt]}, pattern color=paleorange, draw=none] coordinates {(-\xlim, \deltavtol) (0, \deltavtol) (0, \deltavtol+\deltavfree) (-\xlim, \deltavtol+\deltavfree)};
\addplot[fill=palegreen, draw=none] coordinates {(0, 0) (\xlim, 0) (\xlim, \ylim) (0, \ylim)};
\addplot[fill=palegreen, draw=none] coordinates {(-\xlim, -\ylim) (0, -\ylim) (0, 0) (-\xlim, 0)};

\end{axis}
\draw[<->, thick] (2.3, 3.4) -- (2.3, 4.1) node[midway, left] {$\Delta v_\text{free}$};
\draw[<->, thick] (2.3, 2.65) -- (2.3, 3.35) node[midway, left] {$\Delta v_\text{conform}$};
\end{tikzpicture}}
    \resizebox{!}{35mm}{\begin{tikzpicture}
\begin{axis}[
    width=0.45\textwidth,
    xlabel={$q$},
    ylabel={$v-v_\text{ref}$},
    axis lines=middle,
    domain=-12:12,
	ticks=none,
    enlargelimits=true,
    axis line style={->, thick},
    xlabel style={anchor=north},
    ylabel style={anchor=west},
    axis on top,
    axis line style={-latex}
]

\def\xlim{12}
\def\ylim{3}
\def\deltaqfree{2.5}
\def\deltavfree{1}

\addplot[pattern={Lines[angle=45,line width=2.5pt, distance=8pt]}, pattern color=paleorange, draw=none] coordinates {(-\deltaqfree,\ylim) (\deltaqfree, \ylim) (\deltaqfree, -\ylim) (-\deltaqfree, -\ylim)};
\addplot[pattern={Lines[angle=45,line width=2.5pt, distance=8pt]}, pattern color=paleorange, draw=none] coordinates {(-\xlim, \deltavfree) (\xlim, \deltavfree) (\xlim, -\deltavfree) (-\xlim, -\deltavfree)};
\addplot[fill=palegreen, draw=none] coordinates {(\deltaqfree, \deltavfree) (\xlim, \deltavfree) (\xlim, \ylim) (\deltaqfree, \ylim)};
\addplot[fill=palegreen, draw=none] coordinates {(-\xlim, -\ylim) (-\deltaqfree, -\ylim) (-\deltaqfree, -\deltavfree) (-\xlim, -\deltavfree)};

\end{axis}
\draw[<->, thick] (3.2, 4.5) -- (2.67, 4.5) node[left] {$\Delta q_\text{free}$};
\draw[<->, thick] (2.3, 2.65) -- (2.3, 3.35) node[midway, left] {$\Delta v_\text{free}$};
\end{tikzpicture}}%
    \caption{Two SwissGrid incentive schemes available to DSOs: active (left) and semi-active (right). Reactive power demands that reduce voltage deviation (green regions) are rewarded, whereas those that increase deviation (white regions) incur penalties. Small tolerance bands accommodate measurement errors.}
    \label{fig:swiss-incentives}
\end{figure}
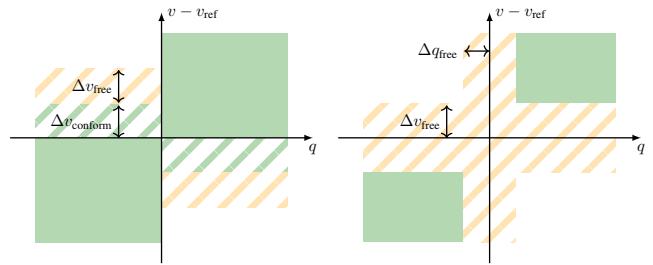
Each DSO is incentivized to track the voltage reference signal according to one of two possible incentive schemes, depending on the specific agreement negotiated. These two incentive schemes are illustrated schematically in Figure~\ref{fig:swiss-incentives}.
Fundamentally, the DSO positive reactive power demand is rewarded if the measured bus voltage $v$ is larger than the reference $v_\text{ref}$, and negative reactive power demand is rewarded if $v<v_\text{ref}$ (green areas in Figure~\ref{fig:swiss-incentives}). Otherwise (white areas), DSOs are charged a financial penalty. 
Small tolerance areas ($\Delta v_\text{free}$, $\Delta v_\text{conform}$, $\Delta q_\text{free}$) extend the region where the DSOs are still rewarded (shaded green areas) and introduce no-reward-no-penalty areas (shaded yellow areas), mainly to account for measurement errors.

Intuitively, it seems that such an incentive scheme promotes good tracking performance by the DSOs, as reactive power is rewarded/penalized depending on whether it alleviates/aggravates the optimal voltage tracking error. 

However, this is not entirely correct. 
Figure~\ref{individual-responses} shows the payment received by two exemplary DSOs in the Swiss grid (red line) as a function of their reactive power demand $q$. 
Crucially, changes in reactive power demand $q$ affect the bus voltage (blue line), and therefore the operating point moves in the reward/penalty plane.
Two issues are evident. First, the reward curve is not concave, leading to a possibly difficult optimization problem to be solved by the DSO. Moreover, the maximal reward is not necessarily achieved when the DSO tracks the voltage reference perfectly. 

\begin{figure}[tb]
    \centering
    \includegraphics[width=\columnwidth]{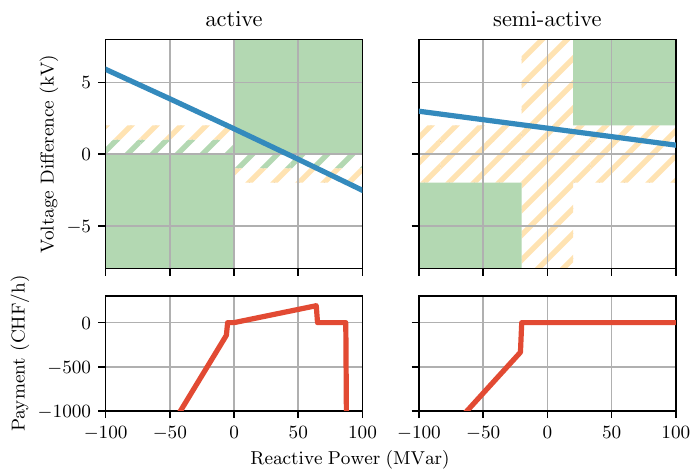}
    \caption{Payment received by two DSOs as a function of their reactive power demand, when the other DSOs maintain their injection. At the Nash Equilibrium, each agent must maximize their reward, which appear to be a discontinuous and non-convex curve.}
    \label{individual-responses}
\end{figure}

%

The analysis of historical data provided by Swissgrid (Figure~\ref{responses}) indicates that reactive power responses from DSOs do not consistently result in positive financial rewards and often lead to significant voltage-tracking errors. An exception is represented by traditional power plants, which typically achieve near-perfect voltage tracking, except when limited by their reactive power constraints. It also appears from the data that many DSOs may frequently reach their reactive power limits, resulting in penalties. However, these constraints are currently not incorporated into the OPFV program, determining reference profiles $v_{\text{ref}}$ that are infeasible to track.

\begin{figure}
    \includegraphics[width=\columnwidth]{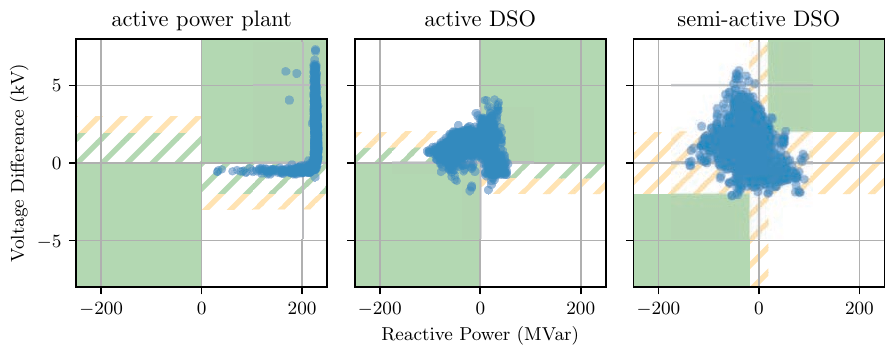}
    \caption{The voltage-regulation incentive implemented in the Swiss grid. DSOs are classified as active or semi-active nodes, which correspond to slightly different incentives. Power plants have a different (and stricter) incentive scheme. As represented in the plots, nodes are remunerated if their reactive power demand contributes to the voltage support goal. Data show that the compliance of nodes is not uniform across the participants.}
    \label{responses}
\end{figure}

This sequential computation of optimal voltage references via the OPFV program and the \emph{ex-post} reward of the tracking performance has clear limitations. In particular, it neglects valuable real-time information about the DSOs' constraints and incentive responses when setting voltage references. These drawbacks  motivate a more effective principled co-design of incentives and DSO responses.


\section{Game-theoretic formulation} \label{sec:game_formulation}

In our mathematical abstraction of the reactive power procurement problem, we adopt the same hierarchical architecture as depicted in Figure~\ref{swiss-architecture}, but we design a different incentive.
Crucially, we introduce an additional degree of flexibility by allowing the TSO to iteratively update the voltage reference signal $v_{\text{ref}}$ in real-time, in response to observed DSO responses, i.e., their reactive power. This approach enables us to account for both the DSOs' reward sensitivity and reactive power constraints, resulting in a more efficient and responsive voltage support mechanism.

Let us consider a transmission network with $n$ buses (each one corresponding to a different DSO for simplicity) and define these vectors of $\mathbb R^n$:
\begin{itemize}
    \item[$q$] the vector of bus reactive power demands;
    \item[$p$] the vector of bus active power demands;
    \item[$v$] the vector of bus voltage magnitudes;
    \item[$v_\text{ref}$] the vector of bus voltage references.    
\end{itemize}
We use the subscript $i$ to refer to the quantity corresponding to bus $i$, and $-i$ to refer to all buses except $i$.

The overall decision problem jointly faced by the TSO and the DSOs consists in the following single-leader multi-follower Stackelberg game:
\begin{subequations}\label{eq:stackelberg_game}
\begin{align}
\min_{v_\text{ref}, q} \;\; & \sum_i \Pi(q_i,v_i(q_i,q_{-i},p),v_\text{ref}) 
    \tag{1a}\label{eq:outer_obj}\\[0.5em]
\text{s.t.}\;\; &  \underline{v}\leq v(q,p) \leq \overline{v} 
    \tag{1b}\label{eq:v_constr}\\[0.5em]
& \forall i\in \mathcal{I}: \notag \\
& \quad  q_i = \underset{\; \; \xi_i}{\text{argmin}}\;\;  c_i(\xi_i)-\Pi(\xi_i,v_i(\xi_i,q_{-i},p),v_\text{ref}) 
    \tag{1c}\label{eq:inner_obj}\\[-0.2em]
& \hspace*{1.7cm} \text{s.t.}\; \; \underline{q_i}\leq \xi_i \leq \overline{q_i} 
    \tag{1d}\label{eq:q_constr}
\end{align}
\end{subequations}
where
$\Pi(q_i, v_i, v_{\text{ref}})$ is the economic reward parametrized in $v_{\text{ref}}$, the decision variable of the TSO; $v(q,p)$ models the voltage as a function of the power demands; and $c_i(\xi_i)$ is the cost of producing $\xi_i$ unit of reactive power.

Notice that, for clarity, we use $\xi_i$ when we refer to reactive power demand at bus $i$ as a decision variable (for example, inside an algorithm), while $q_i$ is the solution of the inner problem \eqref{eq:inner_obj}-\eqref{eq:q_constr}.

We consider the incentive function
\begin{equation}	\Pi(q_i,v_i,v_i^\text{ref})= \gamma (v_i - v_i^\text{ref}) q_i
\end{equation}
where $\gamma$ represents a tariff coefficient set by the TSO, and therefore $\gamma (v_i - v_i^\text{ref})$ determines the price per unit of reactive power based on the voltage deviation. 
Therefore, it incentivizes DSOs to provide reactive power support when the local voltage is below the reference and encourages reactive power absorption when the voltage is above the reference.
Compared to the Swissgrid incentive reviewed in Section~\ref{sec:swissgrid}, the incentive is proportional to the voltage tracking error and not simply dependent on its sign.

For the analysis, we consider a linearized version of the power flow equations
\begin{equation}
\label{eq:linPF}
v=Rp+Xq +v_0
\end{equation}
where $R,X$ are derived from the resistance and reactance matrices, and $v_0$ is the baseline voltage profile of the system in the absence of any power consumptions.

\subsection{Properties of the lower level game (DSOs)}



At the lower level of \eqref{eq:stackelberg_game}, the DSOs aim at maximizing their profit from participating in the voltage support scheme, while adhering to their reactive power limits.
This is formalized by the constrained optimization problem \eqref{eq:inner_obj}--\eqref{eq:q_constr}.

The key factor that determines a coupling in the decisions of the DSOs lies in the incentive function $\Pi(q, v_i, v_i^\text{ref})$, which uses the real-time voltage error $(v_i - v_i^\text{ref})$ to change the remuneration for procurement. 
Since $v_i\left( q,p \right)$ is determined by all reactive power demands $q = (q_i, q_{-i})$, each DSO remuneration depends on the reactive power responses of the others $q_{-i}$. 
Collectively, these DSOs optimization problems constitute a noncooperative game parametrized in $v_{\text{ref}}$.

In practice, the cost of producing/consuming reactive power depends on the DSO's energy assets and contractual agreements with the independent stakeholders in their distribution network.
Here, we choose as a quadratic function of the form $c_i(\xi _i)=\frac{1}{2}C_i\xi _{i}^{2}$. 

In the remainder of this section, we analyze the properties of the lower-level game and establish conditions for the existence and uniqueness of its Nash equilibrium.

\begin{proposition} \label{prop:convexity}
    For each DSO $i$, the cost function $f_i\left( v_{i}^{\mathrm{ref}},\xi_i,\xi_{-i} \right) \coloneqq c_i\left( \xi _i \right) -\gamma \left( v_i\left( \xi ,p \right) -v_{i}^{\mathrm{ref}} \right) \xi _i$ is convex with respect to $\xi_i$ for any fixed $\xi_{-i}$ and $-v_{i}^{\mathrm{ref}}$.
\end{proposition}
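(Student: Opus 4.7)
The plan is to reduce $f_i$ to an explicit univariate quadratic in $\xi_i$ by substituting the linearized power flow model, and then check the sign of its second derivative.

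First, I would use \eqref{eq:linPF} to isolate the $\xi_i$-dependence of the voltage at bus $i$: write $v_i(\xi,p) = [X]_{ii}\,\xi_i + \alpha_i(\xi_{-i},p)$, where
\begin{equation*}
\alpha_i(\xi_{-i},p) \coloneqq [Rp]_i + \sum_{j\neq i}[X]_{ij}\,\xi_j + [v_0]_i
\end{equation*}
collects all terms that do not depend on $\xi_i$. Plugging this decomposition, together with the quadratic cost $c_i(\xi_i)=\tfrac{1}{2}C_i\xi_i^2$, into the definition of $f_i$ yields, after a routine regrouping,
\begin{equation*}
f_i(v_i^{\mathrm{ref}},\xi_i,\xi_{-i}) = \Bigl(\tfrac{C_i}{2} - \gamma\,[X]_{ii}\Bigr)\xi_i^{\,2} \; - \; \gamma\bigl(\alpha_i(\xi_{-i},p) - v_i^{\mathrm{ref}}\bigr)\,\xi_i ,
\end{equation*}
which is a univariate quadratic in $\xi_i$ for any fixed $\xi_{-i}$, $p$, and $v_i^{\mathrm{ref}}$.

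Next, I would simply differentiate twice with respect to $\xi_i$ to obtain the constant second derivative $C_i - 2\gamma\,[X]_{ii}$, so that convexity in $\xi_i$ is equivalent to the scalar condition $C_i \geq 2\gamma\,[X]_{ii}$. The main obstacle, and essentially the only substantive point in the proof, is to justify that this inequality holds under the modeling assumptions already adopted. Because $q$ represents reactive power demand (i.e., absorption) and the entries of $X$ in \eqref{eq:linPF} encode the sensitivity of bus voltages to this demand, the self-sensitivity $[X]_{ii}$ is non-positive in the regime considered; combined with $\gamma>0$ and $C_i>0$, this makes $2\gamma[X]_{ii}\leq 0 \leq C_i$, and convexity follows immediately. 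If the sign convention is instead taken with $[X]_{ii}>0$, the same argument goes through provided the cost coefficient $C_i$ is sufficiently large relative to the tariff, which is a mild and standard requirement on the incentive magnitude that I would state explicitly as an assumption before invoking it.
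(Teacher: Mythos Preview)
Your proposal is correct and follows essentially the same approach as the paper: substitute the linearized power flow model to reduce $f_i$ to a univariate quadratic in $\xi_i$, compute the second derivative $C_i - 2\gamma X_{ii}$, and conclude positivity from $C_i>0$, $\gamma>0$, and the negativity of the diagonal self-sensitivity $X_{ii}$. Your hedge about the alternative sign convention is unnecessary here, since the paper explicitly adopts the convention under which $X_{ii}<0$.
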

\begin{proof}
    With linearized grid model \eqref{eq:linPF}, the voltage at node $i$ is given by:
$$
v_i = X_{ii} \xi_i + \sum_{j \neq i} X_{ij} \xi_j + v_{0,i} + R_{i:} p,
$$
where $X_{ii}$ represents the self-sensitivity of voltage at bus $i$ to its own reactive power consumption, and $X_{ij}$ is the mutual-sensitivity.
Substituting $v_i$ into the objective function, we have:
\begin{equation}
    \label{eq:f_i}
    f_i( v_{i}^{\text{ref}},\xi_i,\xi_{-i}) =\frac{1}{2}\left( C_i-2\gamma X_{ii} \right) \xi _{i}^{2}-\gamma K\xi _i,
\end{equation}
where $K = \sum_{j \neq i} X_{ij} \xi_j + v_{0,i} + R_{i:} p - v_i^{\text{ref}}$ is a constant w.r.t. $\xi_i$.
As $C_i>0$,  $\gamma>0$, and diagonal elements of $X$ are negative (which is typical in power networks), we have:
\begin{equation*}
    \frac{d^2 f_i}{d \xi_i^2} = C_i - 2 \gamma X_{ii} > 0,
\end{equation*}
which proves $f_i\left( v_{i}^{\text{ref}},\xi_i,\xi_{-i} \right)$ is convex.
\end{proof}

Before discussing the existence and uniqueness of the lower-level equilibrium, we define the pseudo-gradient mapping $F(v_{\text{ref}}, \cdot)$ as follows:
\begin{equation*}
    \label{eq:pseudogradient}
    F(v_{\text{ref}}, \cdot) = \left( \nabla_{\xi_i}f_i(v_{i}^{\text{ref}},\cdot) \right)_{i\in\mathcal{N}}.
\end{equation*}
The following result establishes sufficient conditions for the existence and uniqueness of the Nash equilibrium.

\begin{proposition} \label{prop:existence&uniqueness}
    Assume $\lambda_{\min}(C - \gamma \tilde{X}) \geq \mu > 0$. Then, the pseudo-gradient mapping $F(v_{\mathrm{ref}}, \cdot)$ is $\mu$-strongly monotone and $L_F$-Lipschitz continuous, with $L_F =\|C - \gamma \tilde{X}\|$.
\end{proposition}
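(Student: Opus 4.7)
The plan is to exploit the fact that, under the linearized power flow \eqref{eq:linPF} and the quadratic cost $c_i(\xi_i)=\tfrac12 C_i \xi_i^2$, each best-response gradient is affine in $\xi$. Consequently the pseudo-gradient mapping $F(v_{\text{ref}},\cdot)$ is itself affine, and both constants (strong monotonicity and Lipschitz) can be read off from a single constant matrix, which will turn out to be exactly $C-\gamma\tilde{X}$.

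First, I would differentiate the expression $f_i(v_i^{\text{ref}},\xi_i,\xi_{-i}) = \tfrac12 (C_i - 2\gamma X_{ii})\xi_i^2 - \gamma K \xi_i$ (with $K$ as in the proof of Proposition~\ref{prop:convexity}) to obtain
\begin{equation*}
\nabla_{\xi_i} f_i = (C_i - 2\gamma X_{ii})\xi_i - \gamma\sum_{j\neq i} X_{ij}\xi_j - \gamma(v_{0,i} + R_{i:}p - v_i^{\text{ref}}).
\end{equation*}
Stacking these partial derivatives over $i$ yields $F(v_{\text{ref}},\xi) = (C-\gamma\tilde{X})\,\xi + b(v_{\text{ref}},p)$, where $\tilde{X}$ is the matrix defined by $\tilde{X}_{ii}=2X_{ii}$ and $\tilde{X}_{ij}=X_{ij}$ for $i\neq j$, and $b$ collects all the $\xi$-independent terms. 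This identifies the Jacobian of the pseudo-gradient with the matrix $A := C-\gamma\tilde{X}$, which is symmetric since the sensitivity matrix $X$ of the linearized power flow is symmetric and $C$ is diagonal.

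The two claims then reduce to standard facts about affine mappings with symmetric slope. For strong monotonicity, I would write $\langle F(v_{\text{ref}},\xi)-F(v_{\text{ref}},\xi'),\,\xi-\xi'\rangle = (\xi-\xi')^\top A\,(\xi-\xi') \geq \lambda_{\min}(A)\|\xi-\xi'\|^2 \geq \mu\|\xi-\xi'\|^2$, using the symmetry of $A$ together with the standing hypothesis $\lambda_{\min}(C-\gamma\tilde{X})\geq\mu$. For Lipschitz continuity, $\|F(v_{\text{ref}},\xi)-F(v_{\text{ref}},\xi')\| = \|A(\xi-\xi')\| \leq \|A\|\,\|\xi-\xi'\|$, so $L_F = \|C-\gamma\tilde{X}\|$ works.

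The only subtle point (and the step I would be most careful about) is the identification of the pseudo-Jacobian with $C-\gamma\tilde{X}$ rather than with $C-\gamma X$: the factor of two on the diagonal arises because the partial derivative of $X_{ii}\xi_i^2/2$-type term contributes $X_{ii}\xi_i$ while the coupling term $X_{ii}\xi_i \cdot \xi_i$ (viewed through the voltage expression) also contributes $X_{ii}\xi_i$. Beyond that bookkeeping, and the use of symmetry of $X$ to pass from $\lambda_{\min}(A)$ to a quadratic-form bound, the proof is a direct calculation.
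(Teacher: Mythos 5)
Your proof is correct and follows essentially the same route as the paper: both compute the pseudo-gradient explicitly as the affine map $F(v_{\text{ref}},\xi)=(C-\gamma\tilde{X})\xi-\gamma(v_0+Rp-v_{\text{ref}})$ with $\tilde{X}=X+\operatorname{diag}(X_{ii})$ (your doubled-diagonal matrix), then use symmetry of $C-\gamma\tilde{X}$ to read off strong monotonicity from $\lambda_{\min}$ and Lipschitz continuity from the operator norm. The bookkeeping point you flag about the factor of two on the diagonal is exactly what the paper's definition of $\tilde{X}$ encodes.
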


\begin{proof}
    From \eqref{eq:f_i}, the pseudo-gradient mapping reads as
    \begin{equation*}
        F(v_{\text{ref}}, \xi) = (C - \gamma \tilde{X}) \xi - \gamma (v_0 + R p - v_{\text{ref}}),
    \end{equation*}
    where $C$ is the diagonal matrix of cost coefficients and $\tilde{X} \coloneqq X + \operatorname{diag}(X_{ii})_{i \in \mathcal{N}}$.
    
    To analyze its properties, we compute the Jacobian matrix:
    \begin{equation*}
        J_F \coloneqq \nabla_{\xi} F(v_{\text{ref}}, \xi) = C - \gamma \tilde{X}.
    \end{equation*}
    Because both $C$ and $\tilde{X}$ are symmetric, $J_F$ is a symmetric matrix.
    
    Since $\lambda_{\min}(C - \gamma \tilde{X}) \geq \mu > 0$, the Jacobian $J_F$ is positive definite. Consequently, for any $\xi_1, \xi_2$, we have:
    \begin{equation*}
        (\xi_1 - \xi_2)^\top (F(v_{\text{ref}}, \xi_1) - F(v_{\text{ref}}, \xi_2)) \geq \mu \|\xi_1 - \xi_2\|^2,
    \end{equation*}
    which establishes that $F(v_{\text{ref}}, \cdot)$ is $\mu$-strongly monotone.

    Furthermore, from the definition of $F$, it follows that:
    \begin{equation*}
        \|F(v_{\text{ref}}, \xi_1) - F(v_{\text{ref}}, \xi_2)\| \leq \|C - \gamma \tilde{X}\| \|\xi_1 - \xi_2\|, \quad \forall \xi_1, \xi_2.
    \end{equation*}
    Denoting the upper bound of the constant matrix norm by $L_F \coloneqq \|C - \gamma \tilde{X}\|$, we conclude that $F(v_{\text{ref}}, \cdot)$ is $L_F$-Lipschitz continuous.
\end{proof}

\begin{remark}
The condition $\lambda_{\min}(C - \gamma \tilde{X}) \ge \mu > 0$ is automatically verified if $\tilde{X}$ is negative definite, which is the case of radial networks \cite[Lemma 1]{Farivar2013}.
For meshed network where $\tilde{X}$ is not negative definite, the condition can be enforced in the design of the incentive by selecting $\gamma <\frac{c_{\min}}{\lambda _{\max}\left( \tilde{X} \right)}$.
\end{remark}

Since \eqref{eq:q_constr} is affine, nonempty and satisfies Slater's condition, and the pseudo-gradient is strongly monotone and Lipshitz continuous by Proposition \ref{prop:existence&uniqueness}, we can invoke \cite[Theorem\ 2.3.3(b)]{facchinei2003finite} to show existence and uniqueness of the lower-level Nash equilibrium $q^*(v_{\text{ref}})$, for any choice of $v_{\text{ref}}$.

\section{Co-design of incentives and DSOs' response} \label{sec:solution}
%
%
%

Based on this game-theoretic formulation, we propose an online protocol to update $v_{\text{ref}}$ as a dynamic incentive signal, and simultaneously automate the response of the DSOs, so that they jointly converge to the optimal grid operation defined by the Stackelberg game in \eqref{eq:stackelberg_game}.

Our proposed protocol is derived by deploying and tailoring the BIG Hype algorithm \cite{GrontasDoerfler2023} as an online control strategy to solve \eqref{eq:stackelberg_game}. This online protocol simultaneously computes the lower-level Nash equilibrium and its Jacobian, namely, the sensitivity of the Nash equilibrium with respect to the incentive parameters. The latter is crucial for calculating the gradient of the upper-level objective. Finally, this gradient then enables the TSO to update the incentive signal $v_{\text{ref}}$ to optimize the objective function  \eqref{eq:outer_obj} in real time. Importantly, our scheme retains the original hierarchical and distributed structure of the problem, ensuring scalability and preserving privacy.

\subsection{Inner Loop: Equilibrium and Sensitivity Estimation}

At the lower level, we estimate the Nash equilibrium of the DSOs' game and its sensitivity with respect to the TSO's incentive parameter $v_{\text{ref}}$. The estimation proceeds as follows.


The Nash equilibrium $q^*(v_{\text{ref}})$ of the lower level game \eqref{eq:inner_obj}--\eqref{eq:q_constr} is estimated via the following fixed-point iteration:
\begin{equation}
    \left(\forall i\in \mathcal{N} \right) \quad \tilde{\xi}^{l+1}_{i}=h_i\left( v_{i}^{\text{ref}}, \tilde{\xi}^l \right), \label{eq:xi_update}
\end{equation}
where  $h_i$ is a projected pseudo-gradient update of the form
\begin{equation*}
h_i\left( v_{i}^{\text{ref}}, \xi \right) \coloneqq \mathbb{P}_{\left[ \underline{q}_i, \overline{q}_i \right]} \left( \xi_i - \eta F_i\left( v_{i}^{\text{ref}}, \xi \right) \right),
\end{equation*}
and $F_i$ is the pseudo-gradient:
\begin{align}
    F_i\left( v_{i}^{\text{ref}},\xi \right) &\coloneqq \nabla_{\xi_i}f_i\left( v_{i}^{\text{ref}},\xi \right) \notag \\
    &= \nabla c_i\left( \xi_i \right) - \gamma \left( v_i - v_{i}^{\text{ref}} \right) - \gamma \xi_i \nabla_{\xi_i} v_i. \label{eq:Fi}
\end{align}
The term $\nabla_{\xi_i} v_i$ represents the sensitivity of the voltage magnitude at DSO $i$'s connection point with respect to its reactive power consumption. 

Simultaneously, we estimate the equilibrium sensitivity with respect to the incentive parameter $v_{i}^{\text{ref}}$ using:
\begin{equation}
  \tilde{s}_{i}^{l+1} = \mathbf{J}_2 h_i\left( v_{i}^{\text{ref}},\tilde{\xi}^{l+1} \right) \tilde{s}_{i}^{l} + \mathbf{J}_1 h_i\left( v_{i}^{\text{ref}},\tilde{\xi}^{l+1} \right). \label{eq:s_update}
\end{equation}

Let $g_i\left( \cdot \right) \coloneqq \mathbb{P}_{\left[ \underline{q}_i,\overline{q}_i \right]} \left[ \cdot \right]$ denote the projection operator. 
Then, the partial Jacobians in \eqref{eq:s_update} can be expressed as
\begin{align}
    \mathbf{J}_1 h_i &= -\eta \nabla g_i\left( \xi_i - \eta F_i\left( v_{i}^{\text{ref}},\xi \right) \right) \mathbf{J}_1 F_i\left( v_{i}^{\text{ref}},\xi \right), \label{eq:J1h}
    \\
    \mathbf{J}_2 h_i &= \nabla g_i\left( \xi_i - \eta F_i\left( v_{i}^{\text{ref}},\xi \right) \right) \left( I - \eta \mathbf{J}_2 F_i\left( v_{i}^{\text{ref}},\xi \right) \right), \label{eq:J2h}
\end{align}
where the operator $\nabla g_i$ is defined as\footnote{This is an abuse of notation as $g$ is not everywhere differentiable.}
\begin{align}
    \nabla g_i(x)=\begin{cases}
	1, \quad \underline{q}_i\le x \le \overline{q}_i\\
	0, \quad \mathrm{otherwise}.\\
\end{cases}
\end{align}

Asymptotically, we can show that the iterations \eqref{eq:xi_update} and \eqref{eq:s_update} converge to $q^{*}(v_{\text{ref}})$ and its sensitivity $\mathbf{J}q^*(v_{\text{ref}})$.

Note that the iteration \eqref{eq:xi_update} can be interpreted as a decomposition of the joint fixed-point iteration $\tilde{\xi}^{l+1} = h(v_{\text{ref}}, \tilde{\xi}^l)$, where the mapping $h(v_{\text{ref}}, \xi) \coloneqq (h_1(v_{\text{ref}}, \xi), \dots, h_N(v_{\text{ref}}, \xi))$ jointly updates all DSOs' reactive power decisions. 
Moreover, the mapping $h(v_{\text{ref}}, \cdot)$ is contractive with contraction constant $\theta \coloneqq \sqrt{1 - \eta (2\mu - \eta L_F^2)} < 1$, for any step size choice $\eta < 2\mu/L_F^2$. 
Consequently, the iteration \eqref{eq:xi_update} converges linearly to $q^*(v_{\text{ref}})$ with rate $\theta$.
The convergence of the sensitivity $\mathbf{J}q^*(v_{\text{ref}})$ follows from \cite[Prop.~2]{GrontasDoerfler2023}.

\subsection{Outer Loop: Incentive Update by the TSO}

The TSO uses the estimated reactive power Nash equilibrium and its sensitivity to update the incentive signal $v_{\text{ref}}$.

To simplify the TSO update, we relax the voltage constraint \eqref{eq:v_constr} using the penalty function  
\begin{equation}
    \phi(v) = \rho \max \left\{ 0,v-\overline{v} \right\} ^2 + \rho \max \left\{ 0, \underline{v}-v \right\} ^2,
\end{equation}
where $\rho>0$ is a tuning parameter.

We then define the augmented objective function as  
\begin{equation}
\varphi \left( v_{\text{ref}}, v, q \right) \coloneqq \sum_i \Pi(q_i, v_i, v_{\text{ref}}) + \phi(v).
\end{equation}
The Stackelberg game in \eqref{eq:stackelberg_game} can be compactly cast as  
\begin{equation}
    \label{eq:single_level}
    \min_{v_\text{ref}} \quad  \varphi \left( v_{\text{ref}}, v^*, q^* \right) =: \varphi_e\left( v_{\text{ref}} \right),
\end{equation}
where $q^* = q^*(v_{\text{ref}})$ and $v^* = v(q^*(v_{\text{ref}}),p)$ for brevity.

Whenever $\varphi_e(\cdot)$ is differentiable at $v_{\text{ref}}$, we can exploit the chain-rule to compute the gradient of $\nabla \varphi_e\left( v_{\text{ref}} \right)$, commonly referred to as hyper-gradient, yielding 
\begin{align}
\label{eq:HG}
    \nabla \varphi_e\left( v_{\text{ref}} \right) = &
    \nabla_1 \varphi \left( v_{\text{ref}}, v^*, q^* \right) \notag \\
    &+ \left( \mathbf{J}_q v \cdot s^* \right)^\top \nabla_2 \varphi \left( v_{\text{ref}}, v^*, q^* \right) \notag \\
    & + ({s^*})^\top \nabla_3 \varphi \left( v_{\text{ref}}, v^*, q^* \right),
\end{align}
where $s^* = \mathbf{J}q^*(v_{\text{ref}})$ is the sensitivity of the Nash equilibrium to $v_{\text{ref}}$, and
$\mathbf{J}_q v$ is the sensitivity of voltage magnitudes to reactive power consumption.\footnote{Typically, this is approximated by the grid's reactance matrix in linearized power flow models.} A similar expression holds when $\varphi_e(\cdot)$ is not differentiable at $v_{\text{ref}}$, where standard Jacobians are replaced with elements of the conservative Jacobian \cite{bolte2021nonsmooth}. For the sake of simplicity, we will not discuss this case here and instead refer the interested reader to the proof of
Theorem 2 in \cite{GrontasDoerfler2023} for a detailed technical analysis.

In practice, to update $v_{\text{ref}}$, we use an approximate version of the hypergradient \eqref{eq:HG}, where the Nash equilibrium $q^*$ and its sensitivity $s^*$ at are substituted with their online estimates, \eqref{eq:xi_update} and \eqref{eq:s_update}, respectively,  yielding 
\begin{align}
\hat{\nabla} \varphi_e \coloneqq &
\nabla_1 \varphi \left( v_{\text{ref}}^{k}, v^k, \xi^k \right) +
\left( \mathbf{J}_q v \cdot s^k \right)^{\mathrm{T}} \nabla_2 \varphi \left( v_{\text{ref}}^{k}, v^k, \xi^k \right) \notag \\
& + \left( s^k \right)^{\mathrm{T}} \nabla_3 \varphi \left( v_{\text{ref}}^{k}, v^k, \xi^k \right), \label{eq:hypergrad approx}
\end{align}
with the component gradients explicitly given by
\begin{align}
    \nabla_1 \varphi \left( v_{\text{ref}}, v^k, \xi^k \right) &\coloneqq \left[ -\gamma \xi_{i}^{k} \right]_{i\in \mathcal{N}},
    \\
    \nabla_2 \varphi \left( v_{\text{ref}}, v^k, \xi^k \right) &\coloneqq \left[ \gamma \xi_{i}^{k} + \nabla_{v^{k}_{i}}\phi(v^{k})\right]_{i\in \mathcal{N}},
    \\
    \nabla_3 \varphi \left( v_{\text{ref}}, v^k, \xi^k \right) &\coloneqq \left[ \gamma \left( v_{i}^{k} - v_{i}^{\text{ref},k} \right) \right]_{i\in \mathcal{N}}. \label{eq:nabla_phi_3}
\end{align}
Here, $\xi^k$ and $s^k$ are obtained from the inner loop's equilibrium and sensitivity estimation updates.

Finally, the TSO incentive signal is updated as
\begin{equation}
    v_{\text{ref}}^{k+1} = v_{\text{ref}}^{k} - \epsilon^k \hat{\nabla} \varphi_e \left( v_{\text{ref}}^{k} \right), \label{eq:incentive_update}
\end{equation}
where $\left\{ \epsilon ^k \right\} _{k\in \mathbb{N}}$ is a sequence of step sizes.
\subsection{Online Implementation for Incentive Automation}
While theoretically sound, this approach faces practical implementation challenges due to modeling uncertainties and the need for accurate real-time knowledge of the system state. 

We can significantly enhance implementation robustness by adopting principles from online feedback optimization \cite{ortmann2023deployment, HauswirthDoerfler21c, tutorialMultiAreaOFO, BelgioiosoDoerfler22}.
The key insight is that both the inner loop's gradient \eqref{eq:Fi} and outer loop's gradient \eqref{eq:nabla_phi_3} depend on the real-time voltage error $(v_i - v_i^\text{ref})$. 
Rather than relying on a mathematical model to predict these voltage values, we can directly measure the voltage magnitudes $v$ from the physical system. This approach effectively outsources the evaluation of the voltage-reactive power mapping to the physical grid itself, reducing reliance on model information. 
Moreover, real-time measurements automatically incorporate all system dynamics and external disturbances, enhancing the controller's ability to adapt to changing grid conditions. 

\begin{figure}[tb]
	\centering
    \includegraphics[width=1\linewidth]{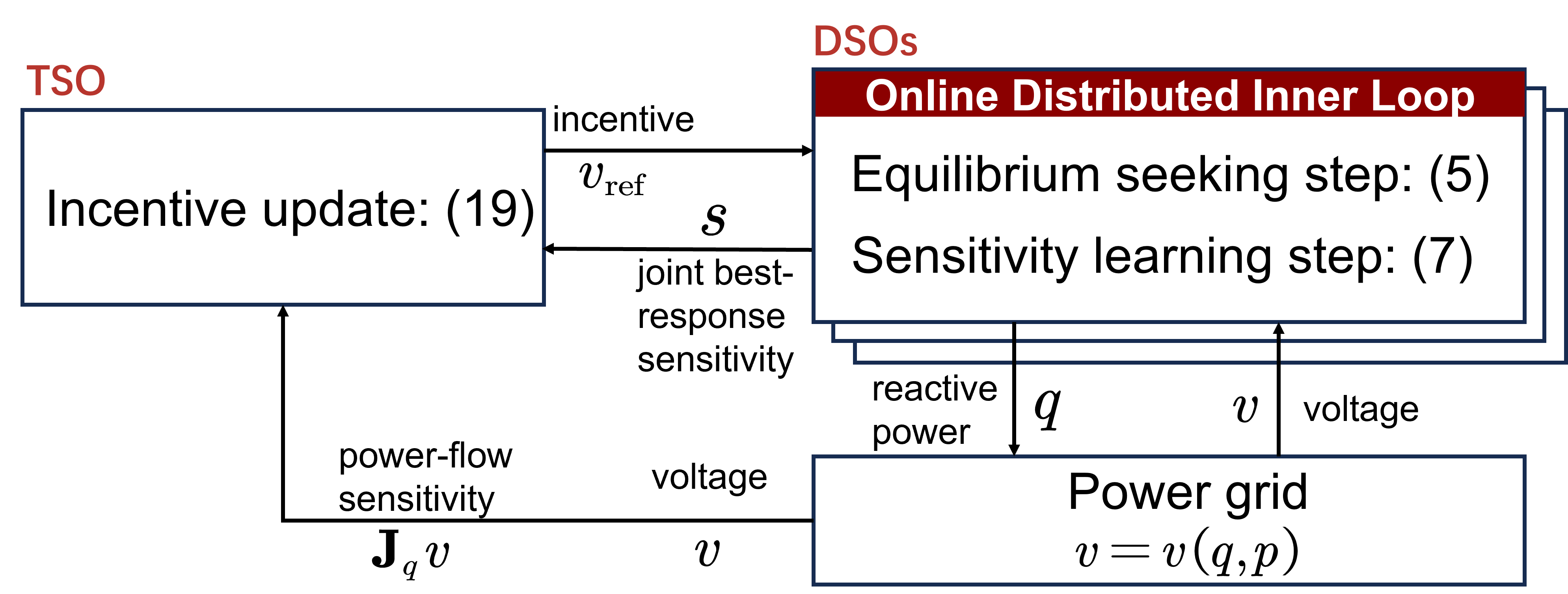}
    \caption{Block diagram of the proposed online incentive automation.}
    \label{fig:block_diagram}
\end{figure}

Figure~\ref{fig:block_diagram} illustrates the resulting architecture.
In the inner loop implementation, DSOs measure real-time voltage $v_i$ at their connection points after each reactive power adjustment. 
These measurements provide direct feedback on how their control actions have affected the grid state, which they then incorporate into their equilibrium-seeking and sensitivity learning steps. Algorithm \ref{alg:inner_loop} presents this distributed measurement-based process in detail.

\begin{algorithm}%
\caption{Online Distributed Inner Loop.}
\label{alg:inner_loop}%
    \textbf{Parameters:} step size $\eta$, tolerance $\sigma$. \\
    \textbf{Input:} $\xi$, $s$, $v_{\text{ref}}$, $\sigma$. \\
    \textbf{Initialization:} $l\leftarrow 0$, termin = false, $\tilde{s}^{l}=s$, $\tilde{\xi}^{l}=\xi$. \\
    \textbf{Iterate until termination} \\[.5em]
    $
                        \begin{array}{l}
                            \left\lfloor
                            \begin{array}{l}
                                \text{For all DSOs }  i \in \mathcal{N} \text{ (in parallel)}: \\ 
    \left\lfloor
    \begin{array}{l}
    \text{  Real-time voltage measurement: } \, v_{i}.\\
    \text{  Equilibrium seeking step:} \, \eqref{eq:xi_update}.\\
    \text{  Jacobian update:} \, \eqref{eq:J1h}, \eqref{eq:J2h}.\\
    \text{  Sensitivity learning step:} \, \eqref{eq:s_update}.\\
    \end{array}
    \right. \\[.5em]
                                %
   \textrm{termin} = \text{\small$\max \left\{ \left\| \tilde{\xi}^{l+1}-\tilde{\xi}^l \right\| ,\left\| \tilde{s}^{l+1}-\tilde{s}^l \right\| \right\} \le \sigma $} \\
                                l \leftarrow l + 1  \\
                            \end{array}
                            \right. 
                        \end{array}	
    $\\[.2em]
    \textbf{Output}: $\xi^{k+1}=\tilde{\xi}^{l}$, $s^{k+1}=\tilde{s}^{l}$ .
\end{algorithm}

Similarly, in the outer loop, the TSO utilizes real-time voltage measurements to inform incentive signal updates. 
By directly measuring grid voltages and using these measurements to approximate the hypergradient, the TSO can adjust the incentive signal in response to actual grid conditions rather than relying solely on model predictions. 
This creates an adaptive feedback mechanism where incentives continuously evolve based on the grid's actual response.
%
The complete co-design of incentives and automation is presented in Algorithm \ref{alg:algorithm}. 

\begin{algorithm}
\caption{Incentives and Automation Co-design.}
\label{alg:algorithm}
    \textbf{Parameters}: step size $\left\{ \epsilon ^k \right\} _{k\in \mathbb{N}}$, tolerance $\left\{ \sigma ^k \right\} _{k\in \mathbb{N}}$.\\
    \textbf{Initialization}: $k\leftarrow 0$, $v_{\text{ref}}^k \leftarrow v_{\text{ref}}^{\text{ini}}$. \\
        \textbf{Iteratively update incentives:}\\
$
            \left \lfloor
            \begin{array}{l}
                \text{DSOs' Estimation of Equilibrium and Sensitivity:}\\[.2em]
                \left|
                            \begin{array}{l}
                             (\xi^{k+1}, \ s^{k+1}) =	\\
                             ~~~~
                \text{\textbf{Online Distributed Inner Loop}} \\[.1em]
                ~~~~			    \left\lfloor
                        \begin{array}{l}
                             \text{Input: } \xi^k,\, s^k,\, v_{\text{ref}}^k,\,\sigma^k\\
                             \text{Output: } \xi^{k+1},\, s^{k+1}\\
                        \end{array}
                            \right. \end{array}
                            \right. \vspace{.25em}
                            \\
                \text{TSO's hypergradient step:} \\[.1em]
                \left|
                \begin{array}{l}
                \text{Real-time voltage measurement: } v^{k}.\\
                \text{Hypergradient approximation: } \eqref{eq:hypergrad approx}. \\
                \text{Incentive update: } \eqref{eq:incentive_update}.\\
                \end{array}
                \right.\\[0.1em]
            k \leftarrow k+1\\
            \end{array}
            \right.
$
\end{algorithm}

\subsection{Algorithm convergence}

In Section \ref{sec:game_formulation}, we established that the lower-level game has a linear-quadratic structure by relying on the linearized grid model \eqref{eq:linPF}.
Under that assumption, we can invoke \cite[Theorem 2]{GrontasDoerfler2023} to prove convergence of the sequence of incentives $\{\varphi_e(v_{\text{ref}}^k)\}_{k\in \mathbb{N}}$ generated by Algorithm \ref{alg:algorithm} to a critical point of the relaxed Stackelberg game in \eqref{eq:single_level}, under appropriate choices of the step size $\{\epsilon^k\}_{k\in \mathbb{N}}$ and tolerance $\{\sigma^k\}_{k\in \mathbb{N}}$ sequences.\footnote{We refer the interested reader to Lemma 6 in \cite{GrontasDoerfler2023} for how to design these sequences.}

However, in the actual implementation, we use real-time voltage measurements directly from the physical grid. This means that, in practice, the algorithm operates with the nonlinear power flow equations rather than their linearized approximation. The convergence guarantees are invalidated if the difference from the linearized model is substantial.
Incorporating the full nonlinear AC power flow equations presents a more realistic yet challenging extension that requires further investigation.

Finally, we note that a constant step size $\epsilon$ would be required in \eqref{eq:incentive_update} to ensure continuous incentive updates in online settings, where the parameters of the bilevel game \eqref{eq:stackelberg_game} (e.g., power injections $p$) vary over time. 
The development of a formal stability and tracking analysis for this case is left for future work.

\section{Numerical experiments}
\label{sec:case}

To validate our proposed approach, we conducted numerical experiments on an illustrative transmission grid with multiple DSOs responding to TSO incentives. 

\subsection{Simulation setup}

The simulations are carried out on a 5-bus transmission network case adapted from \cite{Li2010case5}, which is shown as Figure~\ref{fig:5bus}
The network operates at a single voltage level of 230 kV and includes four DSOs connected at four buses (excluding the slack bus). 
We simulate the grid using Pandapower \cite{Pandapower} to compute the nonlinear power flow solution $v(q,p)$.
\begin{figure}[tbp]
	\centering
    \includegraphics[width=0.9\linewidth]{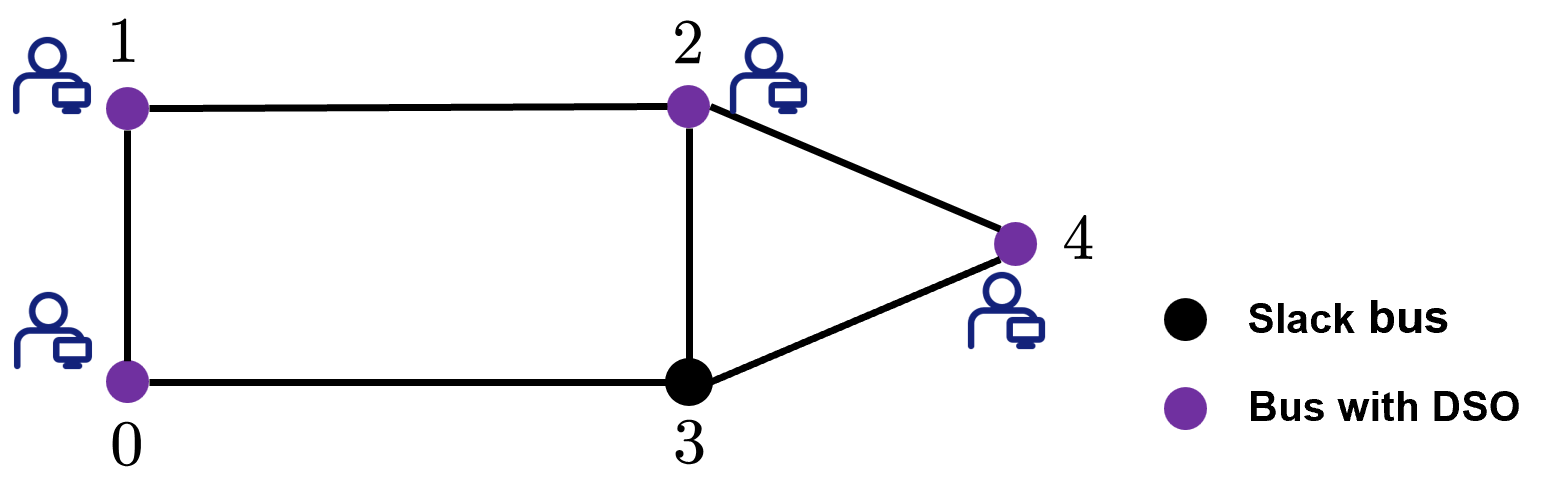}
    \caption{Modified 5-bus test system, showing TSO-DSO connections for voltage regulation studies.}
    \label{fig:5bus}
\end{figure}

The voltage constraints for secure grid operation are set as $\underline{v} = 0.96$ p.u. and $\overline{v} = 1.04$ p.u. 
The DSOs are modeled with quadratic cost functions and individual reactive power limits.
The quadratic cost coefficient is randomly selected from the range $\left[ 0.2,0.8 \right]$ to reflect varying DSO operational costs. 
The reactive power limits are initially set sufficiently large to ensure that DSOs can provide enough reactive power for effective voltage regulation, but are unknown to the TSO.

We implemented the BIG Hype-based algorithm presented in Section \ref{sec:solution} with the proposed online feedback enhancements. 
The step sizes $\eta$ and $\epsilon$ were empirically tuned to $1\times 10^{-3}$ amd $1\times 10^{-4}$, respectively, with the goal of balancing convergence speed and stability.

\subsection{Simulation results}
Our simulation results are illustrated in Figure~\ref{fig:v&v_ref}, which depicts the evolution of both voltage magnitudes and voltage references (incentive signals) at the four DSOs.

\begin{figure}[tbp] 
    \centering 
    \includegraphics[width=0.75\linewidth]{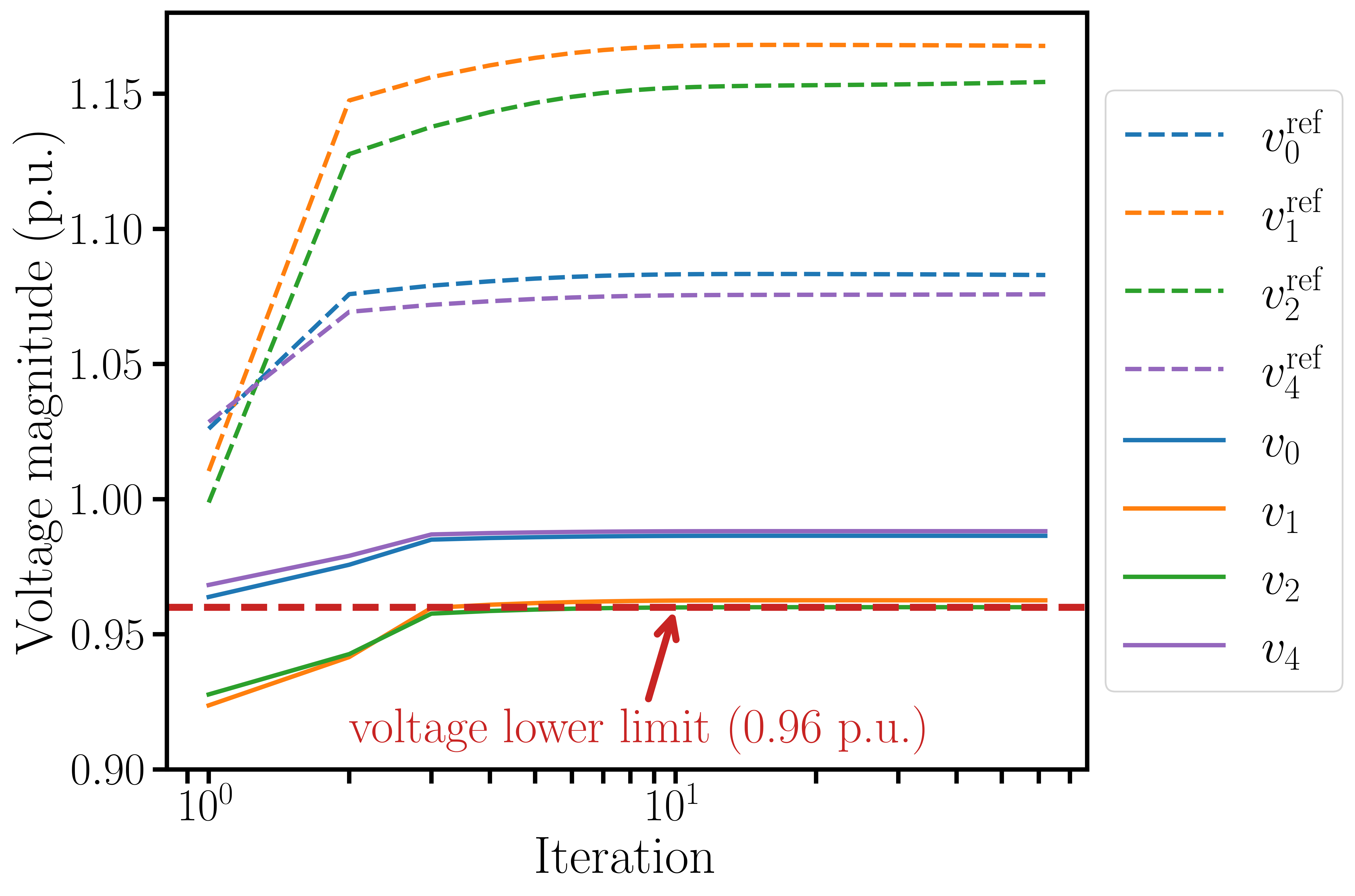} 
    \caption{Evolution of voltage magnitudes and voltage references ($v_{\text{ref}}$) at the four buses with DSO connections. The voltage reference serves as an incentive signal rather than a strict tracking target, guiding DSOs to regulate voltages within acceptable limits.} \label{fig:v&v_ref} 
\end{figure}

\begin{figure}[tbp]
    \centering
    \includegraphics[width=0.75\linewidth]{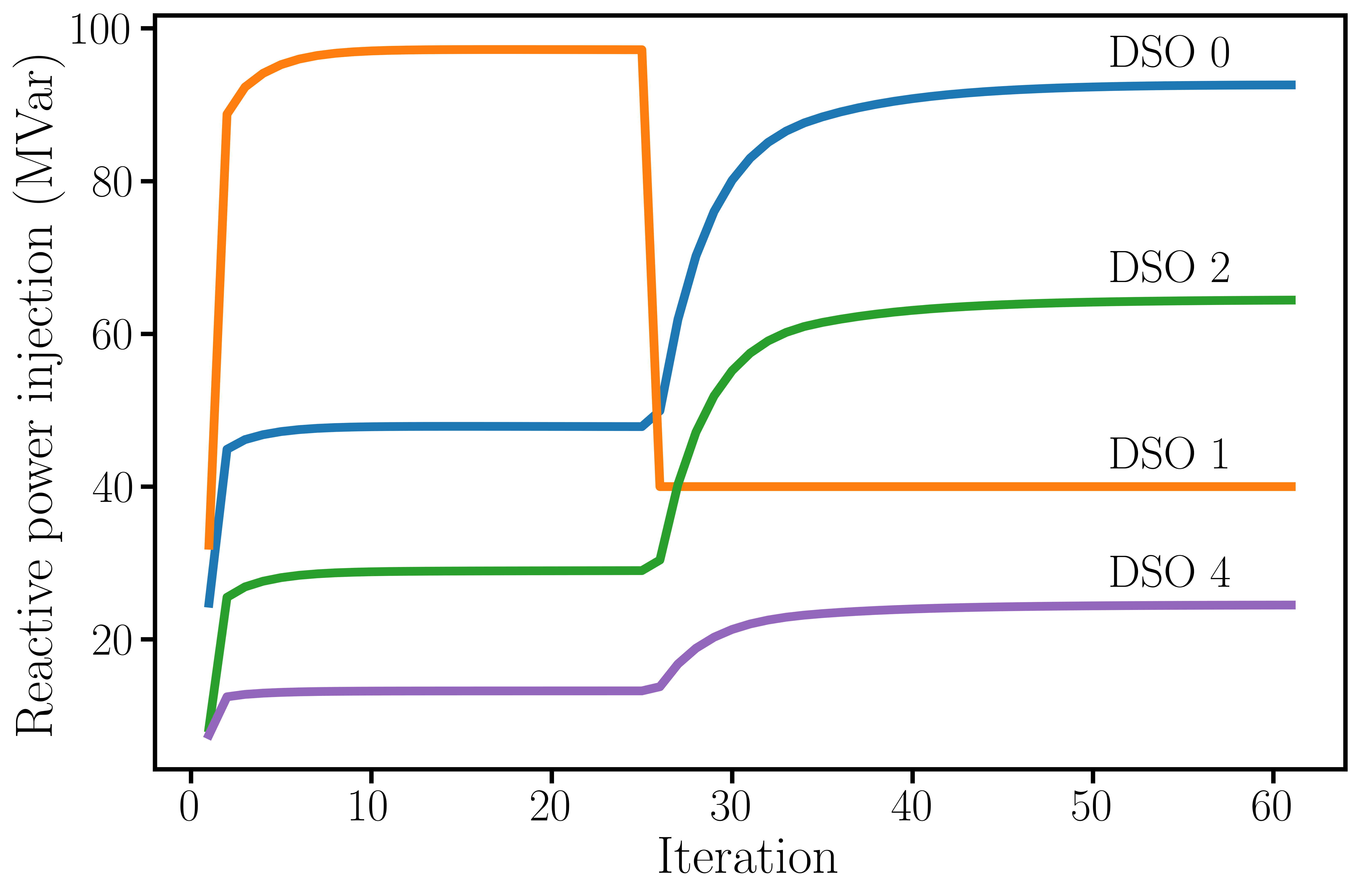}
    \caption{Reactive power responses of DSOs after the disturbance. DSO 1's injection decreases to its new limit, while other DSOs compensate to restore voltage stability.}
    \label{fig:reactive_power_after_disturbance}
\end{figure}

\begin{figure}[tbp]
    \centering
    \includegraphics[width=0.75\linewidth]{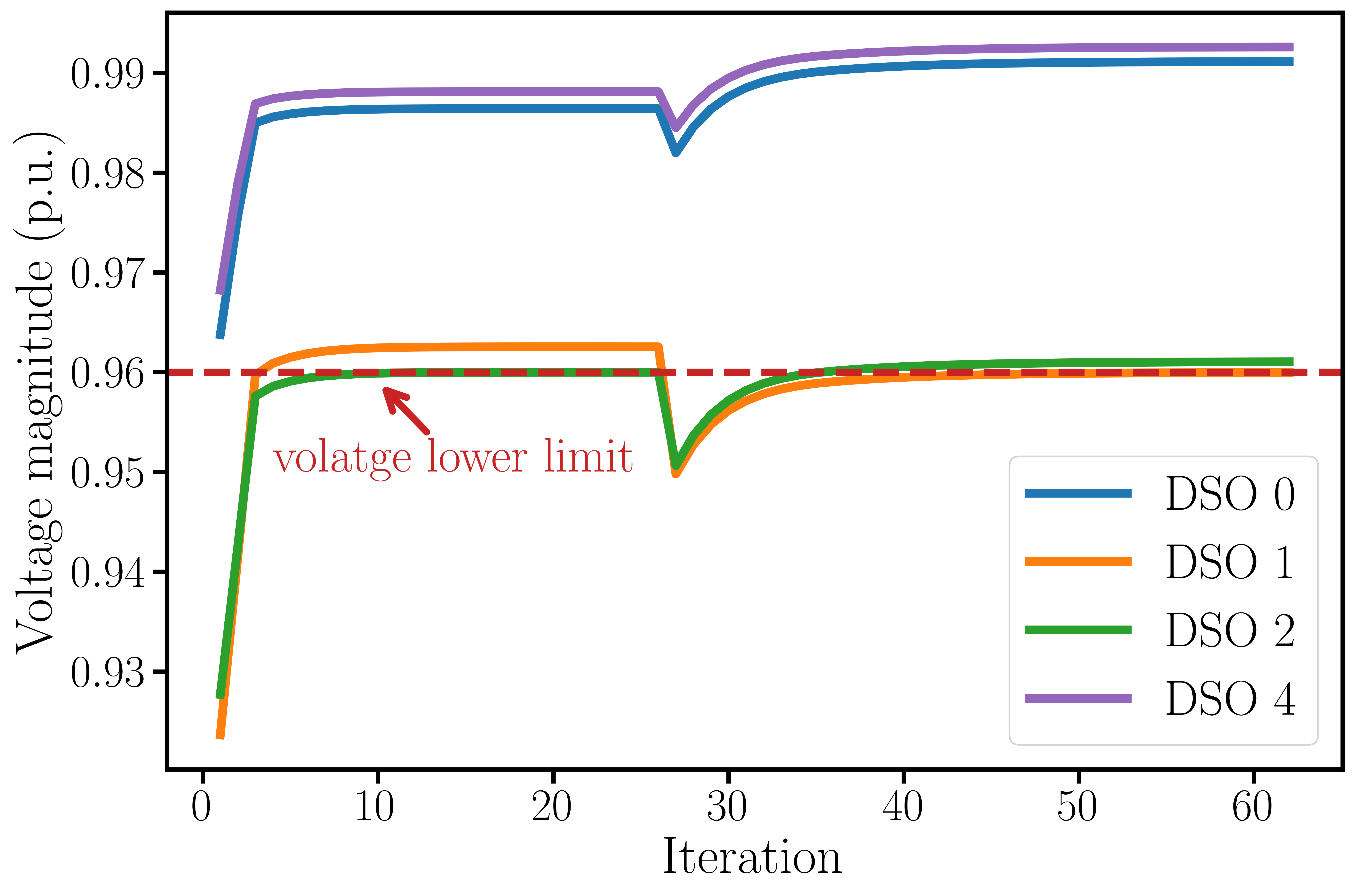}
    \caption{Voltage response after the disturbance. The voltage deviation is quickly corrected as the system stabilizes at a new equilibrium.}
    \label{fig:voltage_after_disturbance}
\end{figure}

\begin{figure}[t]
    \centering
    \includegraphics[width=0.75\linewidth]{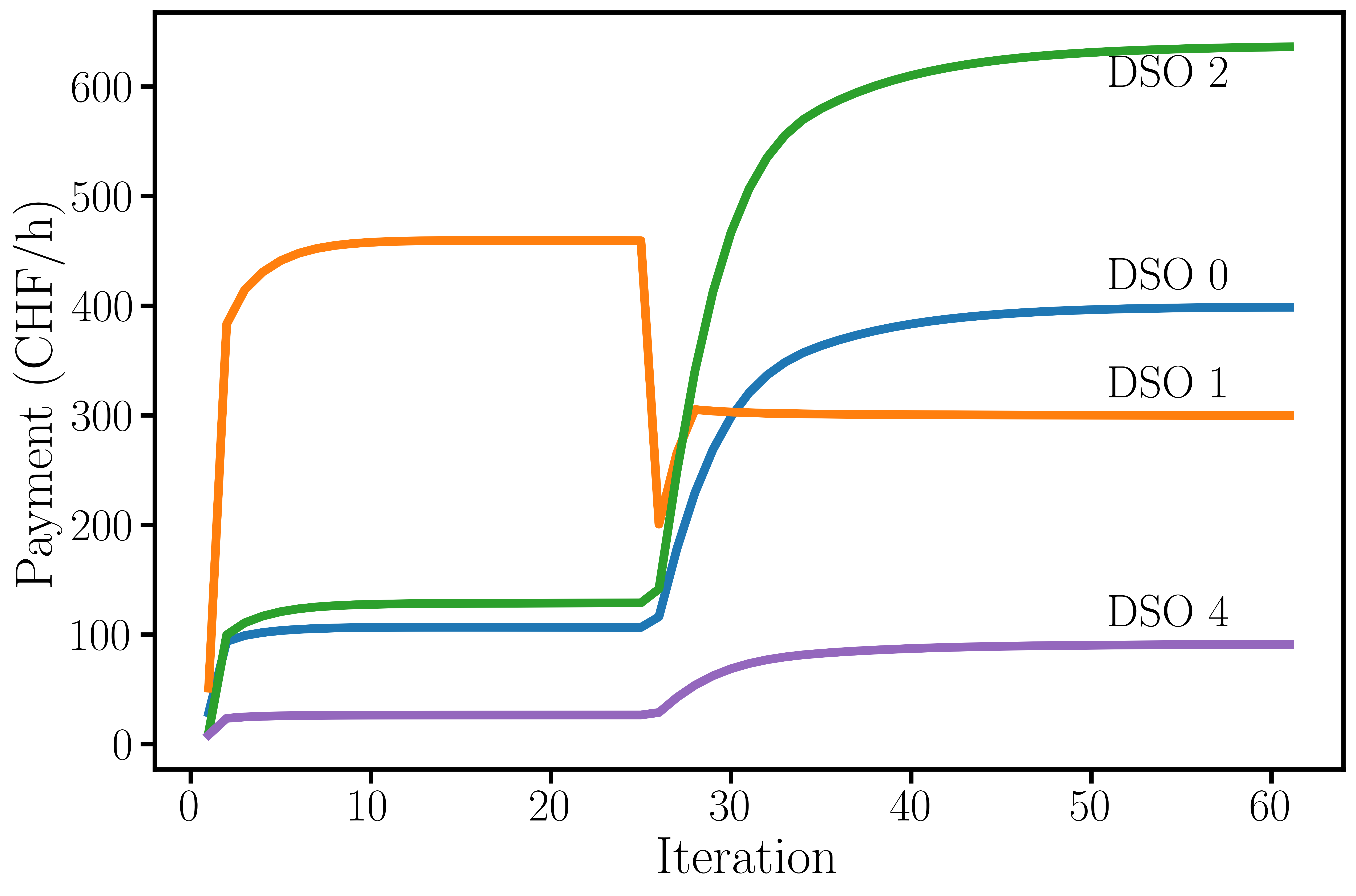}
    \caption{Adjustment of incentive payments after the disturbance. DSO 1's payment decreases due to its reduced injection capability, while other DSOs receive increased payments for their additional contributions to voltage regulation.}
    \label{fig:payment_after_disturbance}
\end{figure}

Initially, two buses' voltage magnitudes are below the lower bound. 
As the incentive updates, the voltage references dynamically adjust, steering all bus voltages into the secure operating region between 0.96 and 1.04 p.u. 
After sufficient iterations, both voltages and incentive signals stabilize, indicating convergence. Notably, we observe that the voltage reference signals are set significantly higher than the actual voltages. This is expected, as the voltage reference is not meant to be directly tracked but instead serves as a parameter to shape the incentives and, in turn, to shape the DSOs' reactive power responses, ultimately ensuring voltage regulation within limits.

To further demonstrate the advantage of our online implementation, we simulate a scenario where the reactive power injection limit of a specific DSO changes during the incentive updating process, reflecting practical situations where distribution networks' energy resources may become unavailable. 
As shown in Figure~\ref{fig:reactive_power_after_disturbance},
initially, the DSOs have reached equilibrium, and the voltage remains stable within its prescribed limits. 
At a certain point, the maximum reactive power injection capacity of DSO 1 is suddenly reduced to 40 MVar.
The reactive power responses of the DSOs adjust immediately—DSO 1's injection drops to its new limit, while other DSOs collectively increase their injections to compensate for the voltage drop. 
Figure~\ref{fig:voltage_after_disturbance} further illustrates that the voltage deviation is quickly corrected, and both the voltage and reactive power injections transition to a new stable state.

From an economic perspective, Figure~\ref{fig:payment_after_disturbance} shows how the payments adjust in response to this change. 
Due to the reduced reactive power injection limit of DSO 1, its payment from the TSO decreases accordingly. 
In contrast, the payments to other DSOs increase, reflecting their greater contribution to voltage regulation. 
This result demonstrates that the economic benefits associated with voltage support are dynamically redistributed among DSOs.

\section{Conclusions}
\label{sec:conclusions}
This paper presents a bilevel game-theoretic framework for voltage regulation, where a TSO designs voltage references as incentive signals to coordinate multiple DSOs, instead of requiring direct control of distribution-level resources. 
By leveraging a Stackelberg game formulation, we analyze the equilibrium of the lower-level noncooperative game of DSOs, propose an iterative incentive update scheme based on the BIG Hype algorithm, and conduct an online implementation utilizing online feedback optimization.
The incentive updating scheme effectively handles the voltage regulation problem's hierarchical decision structure, and we prove theoretical convergence based on the linearized grid model.
The real-time adaptation of incentives based on measured voltages demonstrates the robustness of our online implementation against changing operating conditions. 

Future work will explore more sophisticated incentive structures beyond the linear voltage error used in this paper. 
Different incentive functions have the potential to enhance the responsiveness of the response, shape the payments to the DSO according to desired characteristics, and ensure additional robustness to the DSO responses.
However, the theoretical convergence of such complex incentive functions remains an open question and warrants further investigation.

\section*{ACKNOWLEDGMENT}

The authors would like to thank Jovicic Aleksandar at Swissgrid and Leo Landolt for their help in the analysis of the Swiss voltage support incentives.

\bibliographystyle{IEEEbib}
\bibliography{refs}

\end{document}